\newtheorem{thm}{Theorem}[section]
\newtheorem{lem}[thm]{Lemma}
\newtheorem{cor}[thm]{Corollary}
\theoremstyle{definition}
\newtheorem{rem}[thm]{Remark}
\newcommand{\lra}{\longrightarrow}
\title{Hyperelliptic Jacobians and isogenies}
\author{J.C. Naranjo}
 \address{Juan Carlos Naranjo  \\Universitat de Barcelona  \\ Departament de Matem\`atiques i Inform\`atica \\ Gran Via 585  \\ 08007 Barcelona, Spain  }
 \email{jcnaranjo@ub.edu}
 \author{G.P. Pirola}
 \address{Gian Pietro Pirola  \\ Universit\`a degli Studi di Pavia  \\ Dipartimento di Matematica \\ Via Ferrata, 1  \\ 27100 Pavia, Italy  }
 \email{gianpietro.pirola@unipv.it}
 \thanks{The first named authot was partially supported by the Proyecto de Investigaci\'on MTM2015-65361-P.  The second named author is  partially supported by INdAM (GNSAGA); PRIN 2015 \emph{``  Moduli spaces and Lie theory''} and by  (MIUR): Dipartimenti di Eccellenza Program (2018-2022) - Dept. of Math. Univ. of Pavia.}
\begin{document}

\begin{abstract}
In this note we mainly consider abelian varieties isogenous to hyperelliptic Jacobians.
In the first part we prove that a very general hyperelliptic Jacobian of genus $g\ge 4$ is not isogenous to a non-hyperelliptic Jacobian. As a consequence we obtain that the intermediate Jacobian of a very general cubic threefold is not isogenous to a Jacobian. Another corollary tells that the Jacobian of a very general $d$-gonal curve of genus $g \ge 4$ is not isogenous to a different Jacobian.

In the second part we consider a closed subvariety $\mathcal Y \subset \mathcal A_g$ of the moduli space of principally polarized varieties of dimension $g\ge 3$. We show that if a very general element of $\mathcal Y$ is dominated by the Jacobian of a curve $C$ and $\dim \mathcal Y\ge 2g$, then $C$ is not hyperelliptic. In particular, if the general element in $\mathcal Y$ is simple, its Kummer variety does not contain rational curves. Finally we show that a closed subvariety $\mathcal Y\subset \mathcal M_g$ of dimension $2g-1$ such that the Jacobian of a very general element of $\mathcal Y$ is dominated by a hyperelliptic Jacobian is contained either in the hyperelliptic or in the trigonal locus.
\end{abstract}

\maketitle

\pagestyle{plain}
\section{Introduction}
Hyperelliptic Jacobians of dimension $g\ge 3$ are for many reasons very special irreducible principally polarized abelian varieties (ppav in the sequel).  First of all they have
the highest dimension of the singular locus of the Theta divisor $\Theta$. Indeed, it is well-known that the dimension of $Sing (\Theta )$ is $g-3$ and by a Theorem of Ein and Lazarsfeld (see \cite{EL}) only reducible ppav have dimension $g-2$. It was proved by Andreotti and Mayer in \cite{AM} that the hyperelliptic locus is an irreducible component of the locus of ppav with this property, and it is expected to be the component of highest dimension (see for instance \cite[Conjecture 9.1]{CvdG} and \cite[section 5]{Gr}).  Moreover hyperelliptic Jacobians  have the highest degree of the Gauss map from  $\Theta $ to the projectivization $ \mathbb P^{g-1}$ of the tangent space at the origin (see \cite[Chaper VI, section 3]{ACGH} and \cite{Verra}). 
Finally its Kummer variety contains countably many rational curves (see \cite{kummer}). Therefore the Kummer variety of an abelian variety isogenous to a hyperelliptic Jacobian would have also this rather special property.

It is worth to notice the remarkable result of Mestre (see \cite{mestre}): there exist two families  $\mathcal C,\mathcal C' \to \mathbb A^{g+1}$ of hyperelliptic curves of genus $g$ such that the images of both families in $\mathcal M_g$ are different and of dimension exactly $g+1$, and there exists a non-degenerate $(2,2)$ correspondence in $\mathcal C\times_{\mathbb A^{g+1}} \mathcal C'$. In particular the Jacobian of a general element of the first family is isogenous to the Jacobian of a curve of the second family. Our main purpose goes in the opposite direction: we want to prove that the Jacobian of a very general hyperelliptic curve is not isogenous to a distinct Jacobian. 

In the paper \cite{isog}  we prove that the Jacobian of a general element of a low codimension subvariety of $\mathcal M_g$  cannot be isogenous to a distinct Jacobian. This had been proved by Bardelli and Pirola for the whole moduli space when $g\ge 4$ (see \cite{bardelli_pirola}). Recently some of these results have been proved for Prym varieties (see \cite{laface_martinez}). In this paper we consider abelian varieties and Jacobians which are isogenous to, or even more generally dominated by, hyperelliptic Jacobians. In the first part of the paper we show:

\vskip 3mm
\begin{thm}\label{jacs_isog_hyp}
Let $\mathcal H_g\subset \mathcal M_g$ be the locus of hyperelliptic curves of genus $g$. Assume that for a very general $C\in\mathcal H_g$ there exists an isogeny $JD \lra JC$, where $D$ is a smooth curve. Assume also that either $D$ has genus $g\ge 4$, or is a hyperelliptic curve of genus $3$. Then $D=C$ and the map is the multiplication by a non-zero integer.
\end{thm}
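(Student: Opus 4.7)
The plan is to combine (i) a standard countability reduction to a relative isogeny over a family of hyperelliptic curves, (ii) an infinitesimal Hodge-theoretic analysis forcing the curve $D$ into the hyperelliptic locus, and (iii) a rigidity argument exploiting that $\mathrm{End}(JC)=\mathbb{Z}$ for a very general hyperelliptic $C$ (Zarhin).

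\textbf{Step (i).} The locus of triples $(C,D,\phi)$ with $\phi:JD\to JC$ an isogeny stratifies into countably many locally closed subvarieties, indexed by the degree and kernel-type of $\phi$; the ``very general'' hypothesis forces one stratum to dominate $\mathcal{H}_g$. After restricting to a dense open and taking an \'etale cover, we obtain a smooth base $B$ with $\dim B=2g-1$, smooth families $\pi_C:\mathcal{C}\to B$ (hyperelliptic) and $\pi_D:\mathcal{D}\to B$ (of genus $g$), and a $B$-family of isogenies $\Phi:J(\mathcal{D}/B)\to J(\mathcal{C}/B)$ of fixed degree $n$, with $B\to\mathcal{H}_g$ dominant.

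\textbf{Step (ii).} The family $\Phi$ induces an isomorphism of variations of rational Hodge structure $R^1(\pi_D)_*\mathbb{Q}\cong R^1(\pi_C)_*\mathbb{Q}$, hence of Hodge bundles $\Phi^*:(\pi_C)_*\omega\xrightarrow{\sim}(\pi_D)_*\omega$, and upon taking symmetric squares identifies the infinitesimal variations of Hodge structure of the two families. Explicitly, the Kodaira--Spencer--cup-product composition
\[
\kappa^C_b:T_bB\xrightarrow{KS_C}H^1(C_b,T_{C_b})=H^0(\omega_{C_b}^{\otimes 2})^\vee\xrightarrow{\mu_{C_b}^\vee}\mathrm{Sym}^2H^0(\omega_{C_b})^\vee
\]
and its counterpart $\kappa^D_b$ correspond via $\mathrm{Sym}^2(\Phi_b^*)^\vee$. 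Since $C_b$ is hyperelliptic, $\mathrm{coker}\,\mu_{C_b}$ is $(g-2)$-dimensional, so $\mathrm{im}(\kappa^C_b)\subset(\ker\mu_{C_b})^\perp$ sits in a distinguished $(2g-1)$-dimensional subspace; its image on the $D$-side therefore lies in a prescribed $(2g-1)$-dimensional subspace $V_b\subset\mathrm{Sym}^2 H^0(\omega_{D_b})^\vee$. If $D_b$ were non-hyperelliptic of genus $g\ge 4$, then by Max Noether $\mu_{D_b}$ is surjective, infinitesimal Torelli makes $b\mapsto[D_b]$ an immersion $B\to\mathcal{M}_g$, and we obtain a $(2g-1)$-dimensional locus $\mathcal{Y}\subset\mathcal{M}_g$ of non-hyperelliptic curves whose Jacobians are dominated by hyperelliptic Jacobians. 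The contradiction then comes from a geometric rigidity: the Abel--Jacobi image $\alpha_{C_b}(C_b)\subset JC_b$ is stable under $-1$ (the hyperelliptic involution acts as $-1$), so its preimage $\Phi^{-1}(\alpha_{C_b}(C_b))\subset JD_b$ is a $-1$-stable curve; the induced map of Kummer varieties $\bar\Phi:JD_b/\pm1\to JC_b/\pm 1$ sends these preimages to the rational curves $\alpha_{C_b}(C_b)/\pm1$, and using the countable family of such rational curves produced by \cite{kummer} one extracts a rational component in the Kummer of $JD_b$, forcing $D_b$ to be hyperelliptic. Pinning down precisely which component is rational (and combining with the IVHS constraint so that $D_b$ cannot merely be a special cover) is the main obstacle of the proof.

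\textbf{Step (iii).} Once $D_b$ is hyperelliptic, Zarhin's theorem gives $\mathrm{End}(JC_b)=\mathbb{Z}$ for very general $C_b\in\mathcal{H}_g$. Then $JD_b$ is simple and $\mathrm{Hom}(JD_b,JC_b)\otimes\mathbb{Q}\cong\mathbb{Q}$, so all isogenies between them are proportional. Comparing polarizations gives $\Phi_b^*\Theta_{C_b}=k\Theta_{D_b}$ with $k^g=n$; the minimal representative of $\mathrm{Hom}(JD_b,JC_b)$ realizes an isomorphism of ppav $(JD_b,\Theta_{D_b})\cong(JC_b,\Theta_{C_b})$, whence Torelli yields $D\cong C$ and $\Phi\in\mathrm{End}(JC)=\mathbb{Z}$ is multiplication by a non-zero integer. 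The case $g=3$ with $D$ hyperelliptic goes through identically; the theorem excludes the case $g=3$, $D$ non-hyperelliptic precisely because the rational-curve/IVHS obstruction of Step (ii) loses force in that range.
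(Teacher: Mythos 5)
Your proposal has two genuine gaps, one in Step (ii) and one in Step (iii), and each is fatal on its own. In Step (ii), the argument meant to force $D_b$ into the hyperelliptic locus does not work: the preimage under $\bar\Phi$ of the rational curve $\alpha_{C_b}(C_b)/\pm 1$ is a curve in $JD_b/\pm 1$ admitting a \emph{finite map onto} a rational curve, which does not make it rational; and even if you could extract a genuinely rational component, the implication ``rational curve in the Kummer of $JD_b$ $\Rightarrow$ $D_b$ hyperelliptic'' is false --- the introduction of the paper itself points out that the Kummer of \emph{any} abelian variety isogenous to a hyperelliptic Jacobian contains rational curves, so this reasoning is circular. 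You acknowledge this is ``the main obstacle,'' but it is not an obstacle to be smoothed over later: it is the theorem. The IVHS observation that $\mathrm{im}(\kappa^D_b)$ lands in a $(2g-1)$-dimensional subspace is recorded but never used to derive a contradiction. In Step (iii), knowing $\mathrm{End}(JC_b)=\mathbb{Z}$ and $\mathrm{Hom}(JD_b,JC_b)\otimes\mathbb{Q}\cong\mathbb{Q}$ does \emph{not} imply that the minimal isogeny is an isomorphism of ppav: the generator $\psi$ of $\mathrm{Hom}(JD_b,JC_b)$ satisfies $\psi^*\Theta_C\equiv k\Theta_D$ with $\deg\psi=k^g$, and nothing forces $k=1$ (compare two non-CM elliptic curves related by a cyclic isogeny of prime degree: $\mathrm{Hom}\otimes\mathbb{Q}=\mathbb{Q}$, yet they are not isomorphic). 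So even granting $D_b$ hyperelliptic, your Step (iii) assumes exactly what remains to be proved.

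The paper proceeds along a completely different route, which is worth internalizing because it supplies precisely the two missing ingredients. It degenerates $C$ to an irreducible one-nodal hyperelliptic curve $C_0$ glued from $x$ and $\iota(x)$ on a general hyperelliptic $\tilde C_0$ of genus $g-1$; the limit isogeny of semiabelian varieties forces the extension classes to match, giving a non-constant map $x\mapsto \tilde f_0^*(x-\iota(x))=m(y-z)$ from $\tilde C_0$ into the difference surface $m(\tilde D_0-\tilde D_0)$. A degree/dimension count (in the hyperelliptic case) or the birationality of $\tilde D_0\times\tilde D_0\to m(\tilde D_0-\tilde D_0)$ via the Gauss map (in the non-hyperelliptic case, where the genus bound $g\ge4$ enters) then identifies $\tilde C_0$ with $\tilde D_0$ and settles both the ``$D$ is hyperelliptic'' issue and the value $m=1$. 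Finally, the conclusion $f_t=n\cdot\mathrm{Id}$ on nearby smooth fibres is obtained not from an endomorphism-ring argument but by comparing the sublattice $f_t(H_1(D_t,\mathbb{Z}))\subset H_1(C_t,\mathbb{Z})$ with $nH_1(C_t,\mathbb{Z})$ modulo vanishing cycles for two independent degenerations. If you want to salvage your outline, these are the two places where a genuinely new mechanism must be inserted.
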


Notice that this theorem contains two statements: the first refers to isogenies between hyperelliptic Jacobians (with $g\ge 3$) and it is an extension of the main theorem in \cite{bardelli_pirola}. The second says that if $g\ge 4$ a very general hyperelliptic Jacobian is not isogenous to a non-hyperelliptic Jacobian. Observe that this second result does not hold for $g=3$: indeed the set of irreducible Jacobians isogenous to a fixed hyperelliptic Jacobian is dense in $\mathcal M_3=\mathcal A_3$ since it corresponds to the action of $Sp(\mathbb Q, 6)$ on the Siegel upper space $\mathcal H_3$.

We obtain two consequences of this theorem. The first one concerns the intermediate Jacobian of a smooth cubic threefold:

\begin{thm}\label{thm_cubic} 
 Let $X$ be a very general cubic threefold, then $JX$ is not isogenous to a Jacobian.  
\end{thm}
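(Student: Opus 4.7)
The plan is to combine Theorem \ref{jacs_isog_hyp} with a classical degeneration of cubic threefolds due to Collino. Two facts are crucial. First, by Clemens--Griffiths, the intermediate Jacobian $JX$ of a smooth cubic threefold is never a Jacobian as a principally polarized abelian variety. Second, when a cubic threefold specializes to the chordal variety of a rational normal quartic, its intermediate Jacobian degenerates to the Jacobian $JH$ of a smooth hyperelliptic curve of genus $5$; moreover, the induced map from this boundary stratum to $\mathcal H_5$ is dominant, so a very general hyperelliptic Jacobian of genus $5$ arises as such a limit.

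Suppose, for contradiction, that $JX$ is isogenous to a Jacobian for $X$ in a Zariski-dense subset of the moduli of smooth cubic threefolds. As isogeny types form a countable family, one can find a family $\{X_t\}_{t\in T}$ of smooth cubic threefolds, a family $\{C_t\}$ of smooth curves, and isogenies $\phi_t: JC_t\to JX_t$ of constant degree $d$. Choose a one-parameter arc specializing $X_t$ at $t=0$ to a Collino degeneration with $JX_0=JH$ for $H$ a very general hyperelliptic curve of genus $5$. After semistable reduction, $\{C_t\}$ extends to a family of stable curves with a limit $\widetilde C_0$. Since the isogeny degree is constant and $JH$ is simple, $\widetilde C_0$ must have a single component $C_0$ of genus $5$, with the other components rational; in particular $JC_0$ is an abelian variety and we obtain an induced isogeny $\phi_0: JC_0\to JH$. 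Theorem \ref{jacs_isog_hyp} then forces $C_0\cong H$ and $\phi_0=n\cdot\mathrm{id}_{JH}$ for some nonzero integer $n$, whence $d=n^{10}$.

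To finish, $\ker\phi_t$ is a flat family of \'etale subgroup schemes of $JC_t$ of order $n^{10}$, and $\ker\phi_0=JH[n]$. By rigidity of \'etale torsion, $\ker\phi_t=JC_t[n]$ for all $t$ near $0$, so $\phi_t$ factors as multiplication by $n$ composed with an isomorphism $\psi_t: JC_t\to JX_t$ of abelian varieties. For generic $t$ both $JC_t$ and $JX_t$ have endomorphism ring $\mathbb Z$, hence a unique principal polarization up to sign; accordingly $\psi_t$ is an isomorphism of principally polarized abelian varieties, making $JX_t$ a Jacobian and contradicting Clemens--Griffiths.

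The main obstacles are verifying that Collino's map lands at very general points of $\mathcal H_5$, performing the semistable reduction of $\{C_t\}$ compatibly with the isogeny, and justifying the torsion rigidity that propagates the conclusion from $t=0$ back to generic $t$. Once these steps are in place, the combined force of Theorem \ref{jacs_isog_hyp} and Clemens--Griffiths delivers the contradiction.
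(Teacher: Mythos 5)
Your proposal is correct and follows essentially the same route as the paper: degenerate via Collino to the hyperelliptic genus-$5$ boundary of the intermediate Jacobian locus, invoke Theorem \ref{jacs_isog_hyp} to identify the limit isogeny as multiplication by $n$, propagate this back to the generic fibre, and contradict Clemens--Griffiths. The only (harmless) variation is in the propagation step, where you use rigidity of the \'etale kernel and uniqueness of the principal polarization for $\mathrm{End}=\mathbb Z$, whereas the paper reruns its homology-lattice argument from the end of the proof of Theorem \ref{jacs_isog_hyp}.
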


In a paper of Voisin (see \cite{Voisin}) about the universal triviality of the Chow group of $0$-cycles of cubic hypersurfaces she uses isogenies of intermediate Jacobians of some cubic threefolds to Jacobians of curves in order to produce curves representing odd minimal cohomological classes. Theorem \ref{thm_cubic} says that her method cannot be extended to the general cubic threefold.

The method of proof is by contradiction: assuming the existence of a family of isogenies we move to the boundary of the locus of intermediate Jacobians. Following \cite{collino_fano} we can specialize to the locus of hyperelliptic Jacobians and then we are led to the study of the Jacobians which are isogenous to hyperelliptic Jacobians. Then we finish by applying Theorem \ref{jacs_isog_hyp}.

We  also deduce with a similar method the following result of independent interest:

\begin{thm}\label{gonal}
 Let $\mathcal G_{d,g} \subset  \mathcal M_g$ be the locus of the $d$-gonal curves of genus $g\ge 4$. Let $C$ be a very general curve in any component of $\mathcal G_{d,g}$, and let $f:JD\lra JC$ be an isogeny, then $D=C$ and $f=n \cdot Id_{JC}$ for some $n\in \mathbb Z\setminus \{0\}$.
\end{thm}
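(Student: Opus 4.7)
The strategy parallels the proof of Theorem \ref{thm_cubic}: reduction to a family of isogenies, degeneration to the hyperelliptic locus, and application of Theorem \ref{jacs_isog_hyp}.

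Suppose for contradiction that for a very general curve $C$ in a component $Z$ of $\mathcal G_{d,g}$ there is an isogeny $\phi:JD\to JC$ not of the form $n\cdot\mathrm{id}_{JC}$ (with $D=C$). The countability of Hilbert schemes of isogenies of bounded degree yields an irreducible variety $B$ dominating $Z$ together with smooth families $\mathcal C,\mathcal D\to B$ and an isogeny $\phi:J\mathcal D\to J\mathcal C$ of fixed degree $N$. Since any hyperelliptic curve admits a $g^1_d$ (by adding $d-2$ base points to its $g^1_2$), we have $\mathcal H_g\subset\mathcal G_{d,g}$, so $\mathcal H_g$ lies in at least one component; we concentrate on a $Z$ containing $\mathcal H_g$, handling any remaining components by iterating through lower gonal strata.

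Choose a smooth arc $\Delta\to B$ whose generic point dominates $Z$ and whose closed point $0$ maps to a very general hyperelliptic $C_0\in\mathcal H_g\subset Z$. After semistable reduction of $\mathcal D_\Delta$ and using the properness of the relative moduli of isogenies of degree $N$, the isogeny extends to a morphism $\phi_0:J\mathcal D_0\to JC_0$ of semiabelian varieties. For very general hyperelliptic $C_0$ with $g\ge 3$, $JC_0$ is simple (Koizumi--Shimura); maps from tori to abelian varieties being trivial, the finiteness of $\ker \phi_0$ forces $\mathcal D_0$ to be of compact type, so $J\mathcal D_0=\prod_i JX_i$ for the smooth components $X_i$ of $\mathcal D_0$. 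Simplicity of $JC_0$ further forces exactly one $X_{i_0}$ to have $\dim X_{i_0}=g$, with the remaining components rational, so that $\phi_0$ restricts to an honest isogeny $JX_{i_0}\to JC_0$. Theorem \ref{jacs_isog_hyp} (in the $g\ge 4$ case) then yields $X_{i_0}\cong C_0$ and $\phi_0=n\cdot\mathrm{id}_{JC_0}$, so in particular $N=n^{2g}$.

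It remains to spread this conclusion back to $B$. The morphism $\phi^*:R^1\pi_{\mathcal C*}\mathbb Z\to R^1\pi_{\mathcal D*}\mathbb Z$ of local systems has cokernel locally constant of order $N=n^{2g}$, equal at $0$ to $(\mathbb Z/n)^{2g}$; this forces the elementary divisors of $\phi^*_b$ to be $(n,\ldots,n)$ for every $b$, so $\iota:=\phi^*/n$ is a well-defined integer-valued morphism of local systems. Monodromy-equivariance is automatic from $\phi^*=n\iota$, and $\iota$ inherits compatibility with the Hodge filtrations and the polarizations from $\phi^*$, hence underlies an isomorphism $\psi:J\mathcal D\xrightarrow{\sim}J\mathcal C$ of polarized abelian schemes satisfying $\phi=n\psi$. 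By Torelli, $\mathcal D_b\cong \mathcal C_b$ for every $b\in B$, contradicting the assumption.

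The main technical challenge is controlling the degenerate limit $\mathcal D_0$ via semistable reduction and verifying that the compact-type limit carries exactly one smooth component of genus $g$ (with only rational bubbles attached) so that Theorem \ref{jacs_isog_hyp} directly applies. Handling components of $\mathcal G_{d,g}$ that do not contain $\mathcal H_g$, should any exist, would additionally require inductive specialization to successive lower gonal loci.
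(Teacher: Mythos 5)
Your proposal is correct and follows essentially the same route as the paper: specialize the family of isogenies to the hyperelliptic locus (which lies in the closure of the $d$-gonal locus), apply Theorem \ref{jacs_isog_hyp} to the limit isogeny, and spread the conclusion back over the family via the integral local systems. You in fact supply details the paper leaves implicit (the compact-type analysis of the limit $\mathcal D_0$ and the locally constant cokernel argument), and your only loose end --- components of $\mathcal G_{d,g}$ not containing $\mathcal H_g$ --- is vacuous since the locus of curves with a $g^1_d$ is irreducible for $d\le (g+2)/2$ and equals $\mathcal M_g$ otherwise.
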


We observe that this is not a consequence of the main theorem in \cite{isog} since the codimension of the $d$-gonal locus does not satisfy in general the numerical restriction on the codimension of that theorem ($g-2$ versus $\frac {g-4}3$). 

\vskip 3mm
Motivated by these problems we pose the question of how big can be a locus of ppav which are dominated by  hyperelliptic Jacobians. We find the following answer:

\vskip 3mm
\begin{thm}\label{main}
 Let $\mathcal Y\subset \mathcal A_g$ be a closed irreducible subvariety with $\dim \mathcal Y \ge 2g$ and $g\ge 3$. Assume that for a very general $A$ in $\mathcal Y$ there exists a dominant map $JC \lra A$, where $C$ is a smooth irreducible curve. Then $C$ is not hyperelliptic.
\end{thm}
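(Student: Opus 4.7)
My plan is to argue by contradiction: assuming that for a very general $A \in \mathcal Y$ the dominating curve $C$ is hyperelliptic of some fixed genus $\gamma \ge g$, I will show that $\dim \mathcal Y < 2g$. By a standard spreading-out argument, possibly after an \'etale base change, I may pass to a dense open $\mathcal Y^\circ \subset \mathcal Y$, a family of smooth hyperelliptic curves $\mathcal C \to \mathcal Y^\circ$ of constant genus $\gamma$, and a surjective family homomorphism $J(\mathcal C/\mathcal Y^\circ) \twoheadrightarrow \mathcal A|_{\mathcal Y^\circ}$, which produces a classifying morphism $\alpha \colon \mathcal Y^\circ \to \mathcal H_\gamma$, $y \mapsto [C_y]$. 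The whole argument reduces to bounding $\dim \alpha(\mathcal Y^\circ)$ together with the dimensions of the fibres of $\alpha$.

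If $\gamma = g$, every induced map $JC_y \to A_y$ is an isogeny, so $A_y$ lies in the countable isogeny class of $JC_y$ inside $\mathcal A_g$; the fibres of $\alpha$ are therefore at most countable, and $\dim \mathcal Y \le \dim \mathcal H_g = 2g - 1 < 2g$, contradicting the hypothesis.

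If $\gamma > g$, Poincar\'e reducibility gives an isogeny $JC_y \sim A_y \times B_y$ with $\dim B_y = \gamma - g$, so the image of $\alpha$ lies in the sublocus
$$\mathcal H_\gamma^{\,g,\gamma-g} := \{[C] \in \mathcal H_\gamma : JC \sim A \times B,\ \dim A = g\}.$$
A very general point of $\mathcal H_\gamma^{\,g,\gamma-g}$ admits essentially a unique such decomposition, so the fibres of $\alpha$ remain at most countable and $\dim \mathcal Y \le \dim \mathcal H_\gamma^{\,g,\gamma-g}$. Viewing this sublocus, inside $\mathcal A_\gamma$, as the intersection of $\mathcal H_\gamma$ (of dimension $2\gamma - 1$) with the image of $\mathcal A_g \times \mathcal A_{\gamma - g}$ (of codimension $g(\gamma - g)$), its expected dimension is
$$(2\gamma - 1) - g(\gamma - g) = (2 - g)\gamma + g^2 - 1,$$
which for $g \ge 3$ is decreasing in $\gamma \ge g + 1$ and attains its maximum $g + 1 < 2g$ at $\gamma = g + 1$. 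Combined with the $\gamma = g$ case, this forces $\dim \mathcal Y < 2g$, the desired contradiction.

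The main obstacle is to justify rigorously the expected-dimension estimate for $\mathcal H_\gamma^{\,g,\gamma-g}$, i.e.\ to rule out excess intersection in $\mathcal A_\gamma$ between the hyperelliptic locus and the $(g,\gamma-g)$-product locus. My preferred route is an infinitesimal one: at a general $[C] \in \mathcal H_\gamma^{\,g,\gamma-g}$ the Kodaira-Spencer map of $\mathcal H_\gamma$ factors through the multiplication map
$$H^0(\mathbb P^1, \mathcal O(\gamma - 1))^{\otimes 2} \longrightarrow H^0(\mathbb P^1, \mathcal O(2\gamma - 2))$$
(pulled back by the hyperelliptic double cover $C \to \mathbb P^1$), and a direct calculation should show that the deformations of $C$ preserving the sub-Hodge structure $H^1(A) \subset H^1(C)$ cut out a subspace of codimension exactly $g(\gamma - g)$, matching the expected count.
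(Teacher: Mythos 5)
Your reduction (spreading out, the classifying map $\alpha\colon\mathcal Y^{\circ}\to\mathcal H_{\gamma}$, countable fibres, the clean treatment of $\gamma=g$) is fine, but everything then hinges on the claim that the locus $\mathcal H_{\gamma}^{\,g,\gamma-g}$ of hyperelliptic curves of genus $\gamma$ whose Jacobian has a $g$-dimensional isogeny factor has its ``expected'' dimension $(2\gamma-1)-g(\gamma-g)$. You correctly flag this as the main obstacle, but the step is not merely unproved: it is false, and the paper's own Remark \ref{example} is the counterexample. Starting from any hyperelliptic $C$ of genus $g$ and a degree $2$ cover $\mathbb P^{1}\to\mathbb P^{1}$, the fibre product $\tilde C=C\times_{\mathbb P^{1}}\mathbb P^{1}$ is again hyperelliptic, of genus $\gamma=2g+1$, and $J\tilde C$ is isogenous to $JC\times P(\tilde C,C)$, so it has $JC$ as a $g$-dimensional isogeny factor. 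Moving $C$ in $\mathcal H_{g}$ gives a family of dimension at least $2g-1$ inside $\mathcal H_{\gamma}^{\,g,\gamma-g}$, whereas your expected dimension for that locus is $(2\gamma-1)-g(\gamma-g)=3g+1-g^{2}$, which equals $1$ for $g=3$ and is negative for all $g\ge 4$. The intersection of $\mathcal H_{\gamma}$ with the product locus in $\mathcal A_{\gamma}$ therefore has enormous excess dimension, and no transversality computation of the Kodaira--Spencer map through $H^{0}(\mathbb P^{1},\mathcal O(\gamma-1))^{\otimes 2}\to H^{0}(\mathbb P^{1},\mathcal O(2\gamma-2))$ can rescue the count. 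Worse, the sharpest true bound one could hope for, $\dim\mathcal H_{\gamma}^{\,g,\gamma-g}\le 2g-1$, is essentially a reformulation of the theorem itself, so the strategy is close to circular.

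The paper argues in a genuinely different, infinitesimal way, and something of that kind seems unavoidable. For a general $y$ one produces a pencil $V\subset W=f_{y}^{*}H^{1,0}(A_{y})\subset H^{0}(C_{y},\omega_{C_{y}})$ whose base locus equals that of $|W|$ (Lemma \ref{grass}); the hypothesis $\dim\mathcal Y\ge 2g$ beats the dimension $2g-1$ of $Sym^{2}V^{*}+V^{*}\otimes V^{*\perp}$ and yields a nonzero tangent direction $\xi$ with $\xi\cdot V=0$; hyperellipticity forces the Collino--Pirola adjoint form of $V$ along $\xi$ to vanish (anti-invariant liftings wedge to an invariant, hence zero, holomorphic form), and Theorem \ref{CoPi} upgrades $\xi\cdot V=0$ to $\xi\cdot W=0$, contradicting the injectivity of $d\Phi_{y}$. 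If you want to keep a moduli-theoretic count, you would first need an honest upper bound on $\dim\mathcal H_{\gamma}^{\,g,\gamma-g}$ uniform in $\gamma$, which is precisely what is not available.
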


By taking $\mathcal Y=\mathcal H_g$, the hyperelliptic locus, we obtain that the bound on the dimension is sharp. A less simple example is given in Remark \ref{example}. The method of proof is completely different since we use deformation arguments, the key tool being the adjunction procedure of two holomorphic forms on a curve killed by an infinitesimal deformation as explained in $\ref{CoPi}$. 

 It is natural to ask whether this result can be generalized to higher gonality, that is, how big can be an irreducible  closed $\mathcal Y\subset \mathcal A_g$ with the property that a very general element of $\mathcal Y$ is dominated by the Jacobian of a curve of gonality $d$? The reader can see similar questions and some conjectures in a recent preprint of Voisin \cite{Voisin2} (see, for example, Conjecture 0.2 in loc. cit.).

\vskip 3mm
A nice immediate consequence of our result is the following generalization of the main theorem in \cite{kummer}:
\begin{cor}
  Let $\mathcal Y\subset \mathcal A_g$ be a closed irreducible subvariety with $\dim \mathcal Y \ge 2g$ and $g\ge 3$. Assume that a general element $A$ in $\mathcal Y$ is simple. Then the Kummer variety of $A$ does not contain rational curves.
\end{cor}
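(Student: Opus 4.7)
The strategy is to argue by contradiction: if $K(A)$ carries a rational curve for a general $A\in\mathcal Y$, I will produce a dominant homomorphism from a hyperelliptic Jacobian onto $A$ and contradict Theorem~\ref{main}.

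Suppose, then, that for a general $A\in\mathcal Y$ the Kummer variety $K(A)=A/\langle\pm 1\rangle$ contains a rational curve. The first step is to promote this pointwise existence to an algebraic family. The relative Hilbert (or Chow) scheme of rational curves on the fibres of the universal Kummer variety over $\mathcal Y$ decomposes into countably many irreducible components; since $\mathcal Y$ is irreducible, some component must dominate $\mathcal Y$. Restricting to a Zariski dense open $\mathcal Y_{0}\subseteq\mathcal Y$, I obtain a family $\mathcal Z\to\mathcal Y_{0}$ whose fibre $\mathcal Z_{A}\subset K(A)$ is a rational curve.

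Next I would lift via the quotient $\pi_{A}\colon A\to K(A)$. Let $C_{A}$ be the normalization of the pullback of $\mathcal Z_{A}$ along $\pi_{A}$, so that $C_{A}$ is a smooth curve equipped with a degree-two morphism to $\mathbb{P}^{1}$. Since $A$ contains no rational curves, $C_{A}$ itself cannot be rational (otherwise the induced map $\mathbb{P}^{1}\to K(A)$ would be constant), and the degree-two pencil makes $C_{A}$ hyperelliptic with a non-constant morphism $C_{A}\to A$. These normalizations vary in a family over $\mathcal Y_{0}$, so for a very general $A\in\mathcal Y$ one obtains a hyperelliptic curve $C_{A}$ together with a non-constant morphism $C_{A}\to A$.

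Finally, after fixing a base point, this morphism extends by the universal property of the Jacobian to a homomorphism $JC_{A}\to A$, whose image is a non-zero abelian subvariety of $A$; simplicity of $A$ then forces this homomorphism to be surjective. Hence the simple abelian variety $A$ is dominated by the hyperelliptic Jacobian $JC_{A}$, contradicting Theorem~\ref{main}. The only delicate point is the first step — extracting a genuine algebraic family of rational curves from the pointwise existence on the fibres of $\mathcal Y$ — which is a standard application of relative Hilbert schemes together with a countable union argument; all subsequent steps are essentially formal.
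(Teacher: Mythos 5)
Your proposal is correct and follows essentially the same route the paper intends: the corollary is stated there as an immediate consequence of Theorem~\ref{main}, obtained exactly by lifting the rational curve through $A\to A/\langle\pm 1\rangle$ to get a hyperelliptic curve mapping non-constantly to the simple $A$, spreading this out over a countable union of families (as in Remark~\ref{construction}), and using simplicity to make $JC_A\to A$ surjective. The only point worth tightening is that the preimage of the rational curve could a priori be a split (disconnected) double cover, but each component would then be a rational curve in $A$, which is impossible, so your argument goes through.
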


\vskip 3mm
Next we consider Jacobians dominated by hyperelliptic Jacobians which turns out to be a much more intrincate situation. Observe that this is equivalent to ask which Jacobians contain hyperelliptic curves. The existence of curves of low genus or fixed gonality in a general Jacobian or in a Prym variety has been considered in previous papers (see \cite{Egloffstein} and \cite{NP_indagationes}). 

We obtain the following result:

\vskip 3mm
\begin{thm} \label{jacs_dom_hyper}
 Let $\mathcal Y\subset \mathcal M_g$ be a closed irreducible subvariety of dimension $2g-1$, $g\ge 5$, such that for a very general curve $D$ in $\mathcal Y$ there exists a dominant map of Jacobians $JC\lra JD$, where $C$ is hyperelliptic. Then either $\mathcal Y$ is the hyperelliptic locus in $\mathcal M_g$ or it is contained in the locus of trigonal curves. 
\end{thm}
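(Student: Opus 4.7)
The proof naturally splits according to $g_C := g(C)$, which, after an étale base change of $\mathcal Y$, can be assumed constant along the family.

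\textbf{Case $g_C = g$.} Here $f$ is necessarily an isogeny. The moduli map $\Phi \colon \mathcal Y \dashrightarrow \mathcal H_g$, $D \mapsto C$, has generically finite fibers: for fixed $D$, the set of curves $C$ admitting an isogeny $JC \to JD$ is countable (by countability of finite-index sublattices in $H_1(JD, \mathbb Z)$ together with Torelli). Since $\dim \mathcal Y = 2g-1 = \dim \mathcal H_g$ and $\mathcal H_g$ is irreducible, $\Phi$ is dominant. Thus for very general $D \in \mathcal Y$ the corresponding $C$ is very general in $\mathcal H_g$, and Theorem \ref{jacs_isog_hyp} forces $D \cong C$. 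Hence $\mathcal Y \subset \mathcal H_g$, and by dimension $\mathcal Y = \mathcal H_g$.

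\textbf{Case $g_C > g$.} We must show $\mathcal Y$ is contained in the trigonal locus. Assume for contradiction that the general $D \in \mathcal Y$ has gonality at least $4$; in particular $D$ is not hyperelliptic, as otherwise the previous case would force $g_C = g$. Following the infinitesimal strategy of Theorem \ref{main}, let $\mathcal V \subset H^1(D, T_D)$ be the Kodaira--Spencer image of $T_{[D]} \mathcal Y$, so $\dim \mathcal V = 2g-1$. For each $\xi \in \mathcal V$ the induced deformation $\tilde\xi \in H^1(C, T_C)$ of $C$ lies in $T_{[C]} \mathcal H_{g_C}$ and therefore preserves the rational structure $H^0(C, \omega_C) = \pi^* H^0(\mathbb P^1, \mathcal O(g_C - 1))$. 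Compatibility of $f$ with cup products, expressed as $f_*(\tilde\xi \cdot f^*\omega) = \xi \cdot \omega$, then shows that the bilinear form on $H^0(D, \omega_D)$ induced by $\xi$ factors through the multiplication map $\mathrm{Sym}^2 W \to H^0(\mathbb P^1, \mathcal O(2g_C - 2))$, where $W \subset H^0(\mathbb P^1, \mathcal O(g_C - 1))$ is the $g$-dimensional subspace with $f^* H^0(D, \omega_D) = \pi^* W$.

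Exploiting this factorization, apply the Colombo--Pirola adjoint theorem to pairs of holomorphic forms on $D$ annihilated by the cup product with $\xi$; varying $\xi$ over the $(2g-1)$-dimensional space $\mathcal V$ produces adjoint sections of $\omega_D$ forming a pencil on $D$. A careful dimension count, monitoring the base locus of $W$ on $\mathbb P^1$, shows that this pencil has degree exactly three, yielding a $g^1_3$ on $D$ and contradicting the non-trigonal assumption. The main obstacle is precisely this final estimate: one must verify that the threshold $\dim \mathcal Y = 2g - 1$ yields a pencil of degree three rather than of some larger degree (for which no contradiction with the hypothesis would arise). Compared with $\dim \mathcal Y \geq 2g$ in Theorem \ref{main}, which rules out hyperelliptic dominations altogether, here the drop by one dimension is exactly what allows the trigonal case---and only that case---to survive.
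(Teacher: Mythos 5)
Your Case $g_C=g$ is essentially sound and is a reasonable (if different from the paper's) way to handle the hyperelliptic branch of the dichotomy: countable fibres of $D\mapsto C$ plus $\dim\mathcal Y=\dim\mathcal H_g$ give dominance onto $\mathcal H_g$, and inverting the isogeny lets you invoke Theorem \ref{jacs_isog_hyp}. The problem is Case $g_C>g$, which is where all the content of the theorem lies, and there your argument has a genuine gap: the entire chain from ``$\xi$ factors through $\mathrm{Sym}^2 W\to H^0(\mathbb P^1,\mathcal O(2g_C-2))$'' to ``a pencil of degree exactly three on $D$'' is asserted, not proved, and you explicitly flag the decisive estimate as an unresolved ``obstacle.'' Moreover the mechanism you sketch does not match how the adjoint method actually closes the argument. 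The adjoint form lives in $H^0(C,\omega_C)$, not in $H^0(D,\omega_D)$, and it is not used to manufacture a $g^1_3$ on $D$; it is used to show that any $\xi\in\mathbb T$ annihilating a base-point-free (relative to $B$) pencil $V\subset W$ must annihilate a codimension-one subspace of $W$, i.e.\ must be a \emph{rank one} infinitesimal deformation of $D$.

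Concretely, the steps you are missing are: (1) with $\dim\mathcal Y=2g-1$ the linear-algebra count of Theorem \ref{main} just fails, so one cannot kill a prescribed pencil; instead one must show that the locus $\mathcal D\subset G_W$ of pencils annihilated by some $\xi\in\mathbb T$ meets every line of the Grassmannian and hence has codimension at most one (the paper does this with a degree argument for a map of vector bundles on $\mathbb P^1$); (2) the resulting dichotomy $\mathcal D\not\subset G_{BL}$ (which gives $\xi\cdot W=0$, contradicting injectivity of $d\Phi_y$, exactly as in Theorem \ref{main}) versus $\mathcal D=G_{BL}$; (3) in the latter case, the adjoint argument applied to pencils with base locus $\gamma^{-1}(\gamma(p))$ produces, for each $p\in C_y$, a rank-one deformation of $D_y$, giving a \emph{rational} curve in the rank-one locus $S\subset\mathbb P H^1(D_y,T_{D_y})$; (4) Griffiths' theorem that $S$ is the bicanonical image of $D_y$ when $\mathrm{Cliff}(D_y)\ge 2$, forcing $\mathrm{Cliff}(D_y)\le 1$; and (5) the separate exclusion of plane quintics (which have gonality $4$ but Clifford index $1$, so your gonality-based contradiction would not even reach them) via the result of \cite{quintic} on rank-one deformations of plane quintics. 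Without these, the proof of the trigonal branch is not there.
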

 
The proof of this theorem is more involved and uses that for a curve of Clifford index $\ge 2$ the infinitesimal deformations of rank $1$ are the Schiffer variations (see \cite{Gr_IVHS_III}). The quintic plane case is ruled out by using that there are no rank $1$ infinitesimal deformations preserving the planarity of the curve (see \cite{quintic}). 

\vskip 3mm
The structure of the paper is as follows: in section $2$ we prove Theorem \ref{jacs_isog_hyp} by using degeneration methods, the techniques in this part are very much connected with those of \cite{isog}. Next we deduce   Theorems  \ref{thm_cubic} and \ref{gonal}. Section $3$ is devoted to prove Theorem \ref{main}. This time the proof involves the adjoint construction developed in \cite{collino_pirola}. Following the same ideas we prove  Theorem \ref{jacs_dom_hyper} in the last section.  
 
\vskip 3mm
We work over the complex numbers. As a general rule the hyperelliptic curves are denoted with the letter $C$. We say that a property holds for a very general point of a variety $X$ if it holds in the complement of the union of countably many proper subvarieties of $X$. 

\vskip 5mm
\section{Jacobians isogenous to hyperelliptic Jacobians and applications} 

\vskip 3mm

 The aim of this section is to give the proof of Theorem \ref{jacs_isog_hyp} and next to deduce Theorem \ref{thm_cubic} and Theorem \ref{gonal}. 
 
 \begin{rem}\label{construction} 
 The beginning of the argument is a standard construction that will be used several times in the sequel: 
 Assume that $\mathcal Y$ is a closed
 irreducible subvariety of some moduli space $\mathcal M$ (of curves, of cubic threefolds, of principally polarized abelian varieties,...). In all the examples that we consider, an element $y\in \mathcal Y $ has attached a ppav $A_y$ (the Jacobian, the Intermediate Jacobian,...). Assume that for a very general element $y\in \mathcal Y$ the abelian variety $A_y$ is dominated by the Jacobian of a smooth curve. Then there exists a family of surjective  maps of abelian varieties:   
\begin{equation}\label{map_of_families}
 \xymatrix@C=1.pc@R=1.8pc{
\mathcal {JD}  \ar[rd] \ar[rr]^ f && \mathcal {A} \ar[ld] \\
&\mathcal U 
}
\end{equation}
over a quasi-projective variety $\mathcal U$ and we can assume that the map provided by the existence of the moduli functor associated to the family $\mathcal A\to \mathcal U$:
\[
 \Phi: \mathcal U \lra \mathcal  M
\]
induces a generically finite dominant map $\mathcal U \lra \mathcal Y$.
 \end{rem}
 
 \begin{proof} (of Theorem \ref{jacs_isog_hyp})
 We assume, as in the statement of the Theorem, that the Jacobian of a very general hyperelliptic curve $C$ of genus $g$ is isogenous to another Jacobian $JD$.
We are assuming either that $g\ge 4$ or that $D$ is hyperelliptic of genus  $ 3$. 
  Let   $\mathcal {JD} \to \mathcal {JC}$  be  family of  isogenies over  $\mathcal U$ provided by (\ref{map_of_families}) with  $\mathcal Y =\mathcal H_g$, and denote by 
  $\Phi:\mathcal U\lra \mathcal H_g$ the generically finite dominant map sending $y\in \mathcal U$ to the isomorphism class of $(\mathcal {JC})_y$. 

\vskip 3mm
We go to the boundary of $\mathcal H_g$ in $\overline {\mathcal M_g}$. So we assume that $C$ degenerates to a hyperelliptic nodal curve $C_0$, i.e. we consider a smooth hyperelliptic curve $\tilde C_0$ of genus $g-1$ and we identify, to form a node, two different points of the form $x, \iota_{C_0}(x)$, where $\iota_{\tilde C_0} $ is the hyperelliptic involution. It is well-known that these curves appear in the boundary of the hyperelliptic locus (see for instance \cite[Ch. 10, section 3]{ACG2}).

\vskip 3mm
\begin{rem} 
We will use several times that the limit of a family of isogenies of Jacobians is also an isogeny, that is, an \'etale surjective map between the generalized Jacobians.

Consider in fact two semistable flat families of curves parametrized by a disk  $p :\mathcal D\lra \mathbb D$ and  $\pi: \mathcal C\lra \mathbb D$ such that the curves are smooth away from the central fibres $\mathcal D_{0}$, $\mathcal C_{0}$. A  family of isogenies $\varphi: \mathcal {JD} \lra \mathcal {JC}$ over the punctured disk, gives an isomorphism of the local system
$R^1p_{*} \mathbb Q $ and $R^1\pi_{*} \mathbb Q $ and therefore the existence of a map $\varphi_0:\mathcal {JD}_0 \lra  \mathcal {JC}_0$ that is an isogeny of semiabelian varieties.

For instance for the case when  $C_0$ is irreducible with  one node,  we have only one vanishing cycle for the family $\mathcal C$ and therefore
also the stable model of the $D_0$ must have only one node (see more details in \cite[p. 267]{bardelli_pirola}).
\end{rem}

\vskip 3mm

The generalized Jacobian of $C_0$ is an extension of $J\tilde C_0$ by $\mathbb C^*$ and the limit of the family of isogenies provide a diagram as follows: 

\[
 \xymatrix@C=1.pc@R=1.8pc{
0 \ar[r]     &\mathbb C^{*\, r}\ar[r] \ar[d]^{\gamma}   & JD_{0} \ar[r] \ar[d]^{f_0}  & J\tilde {D}_0 \ar[r] \ar[d]^{\tilde f_{0} }  & 0 \\
0 \ar[r]     &\mathbb C^*     \ar[r]           & JC_{0}  \ar[r]         & J\tilde {C}_{0} \ar[r]           & 0
}
\]
where $\tilde D_{0}$ and $\tilde C_{0}$ stand for the normalizations of $D_{0}$ and $C_{0}$ respectively. Since $f_{0}$ has finite kernel,
$r$ must be $1$ 
and $\gamma (t)=t^m$ for some non-zero integer $m$. An extension as the first row of the diagram corresponds to a class $\pm [y-z]\in J\tilde D_0$ for some points $y,z$ in $\tilde D_0$.

\vskip 3mm
As in \cite{bardelli_pirola}, section 2, to compare the extension classes of each horizontal short exact sequence, we decompose the last diagram into
\[
 \xymatrix@C=1.pc@R=1.8pc{
0 \ar[r]     &\mathbb C^* \ar[r] \ar[d]^{\gamma}   & JD_{0} \ar[r] \ar[d]  \ar@/^/[dd]^<(.2){f_{0}}  & J\tilde {D}_{0} \ar[r] \ar@{=}[d]   & 0 \\
0 \ar[r]     &\mathbb C^*  \ar[r]   \ar@{=}[d]  &E \ar[r] \ar[d] &  J\tilde {D}_{0}\ar[d]^{\tilde f_{0} } \ar[r] &0 \\
0 \ar[r]     &\mathbb C^*     \ar[r]           & JC_{0}  \ar[r]         & J\tilde {C}_{0} \ar[r]           & 0.
}
\]
We get that $\rho(x):=\tilde f_0^* (x-\iota_{\tilde C_0}(x))= m( y-z)$ in $J\tilde D_0$ (we are identifying each Jacobian with its dual by using the principal polarization). Now we move $x$ in the curve $\tilde C_0$, in other words, we change the limit curve keeping fixed the normalization of the curve. Observe that as $x$ moves, the curve $\tilde D_0$ can non vary, since there do not exist non-trivial families of isogenies with a fixed target. Hence the equality above says that the map $\rho$ gives  a non-trivial map from $\tilde C_0$ to surface $ m (\tilde D_0 - \tilde D_0)$. 

\vskip 3mm
We want to show that the normalizations of the limit curves are the same, that is, that $\tilde C_0=\tilde D_0$. This implies by the genericity that the map of Jacobians is the multiplication by a constant. To prove this we proceed separately depending on the hypothesis on $D$:

\textbf{Case 1:} Assume that $D$ is hyperelliptic of genus $g\ge 3$.

Then $D_0$ is also hyperelliptic. This means that $\tilde D_0$ is a hyperelliptic curve of genus $g-1$ and that the extension class is of the form 
$[y-\iota_{\tilde D_0}(y)]$, where $\iota_{\tilde D_0}$ stands for the hyperelliptic involution. Observe that since $\tilde C_0$ is general so is $\tilde D_0$. This implies that the map:
\[
 \tilde D_0 \lra J\tilde D_0, \quad y \mapsto m(y-\iota_{\tilde D_0}(y))
\]
has degree $1$. Indeed, this is a dimension count: otherwise there is map of degree $2m$ from $\tilde D_0$ to $\mathbb P^1$ with two fibres of the form $my+mz$. Then:
\[
 2(g-1)-2=-2m+4(m-1)+r=2m-4+r,
\]
thus $r=2(g-m)$. Then the family of curves $\tilde D_0$ with this kind of pencils depend on $r+2-3=2(g-m)-1=2g-1-2m$ parameters, therefore 
$\dim \mathcal H_{g-1}=2g-3\ge 2g-1-2m$, so  $m=1$, and for $m=1$ the degree of the map is $1$ by the uniqueness of the hyperelliptic linear series. We obtain that the map $\rho $ provides an isomorphism between the curves $\tilde C_0$ and $\tilde D_0$.

\vskip 3mm
\textbf{Case 2:} Assume that $D$ has genus $g\ge 4$.

If the curve $\tilde D_0$ is hyperelliptic we can apply the case 1. So assume that it is not and we reach a contradiction. First we need the following:

\vskip 3mm
\textbf{Claim:}  The  map of surfaces $\kappa: \tilde D_0 \times \tilde D_0 \lra  m(\tilde D_0 - \tilde D_0)$ sending $(y,z)$ to $m(y-z)\in J\tilde D_0$ is birational.
\vskip 3mm
\textbf{Proof of the claim:} 
We consider the diagram of rational maps:
$$
\xymatrix@C=2cm{
              \tilde D_0 \times \tilde D_0   \ar@{-->}[dd]^{\kappa}  \ar@{-->}[dr]^{s} &  \\
	         & Grass(1,\mathbb PH^0(\tilde D_0,\omega_{\tilde D_0})^*) \\
	   m(\tilde D_0-\tilde D_0) \ar@{-->}[ur]^{G}  &       
    }
$$
where $s$ is the secant map, that is the map sending $(y,z)$ to the line generated by the images of $y$ and $z$ under the canonical map. On the other hand $G$ stands for the Gauss map in the Jacobian sending a point of the surface $m(\tilde D_0-\tilde D_0)$ to the projectivization of the tangent plane to the surface translated to the tangent space in the origin of the Jacobian. It is well-known that this diagram commutes. Moreover both maps, $s$ and $G$, have degree $2$: this is clear for $s$ since we assume that $\tilde D_0$ is not hyperelliptic. For $G$ it is a consequence of the fact that the Gauss map is invariant under isogenies, hence we can replace the surface by $\tilde D_0-\tilde D_0$. This implies that $\kappa$ is birational.
\qed

By composing $\rho $ with the inverse of $\kappa $ and taking a projection on one of the factors we obtain a non-trivial rational map $\tilde C_0 \lra D_0$. We can assume that $\tilde C_0$ is general in the hyperelliptic locus, hence $\tilde C_0 =\tilde D_0$, this contradicts that $\tilde D_0$ is not hyperelliptic.   

\vskip 3mm
The end of the proof follows closely the argument in \cite[Section 6]{isog} and \cite[Proposition 4.2.1]{bardelli_pirola}. We quickly outline the main idea for the convenience of the reader.
We go back to our family of isogenies $f:\mathcal {JD} \lra \mathcal {JC}$. Consider a general point $t\in \mathcal U$ corresponding to smooth curves $D_t$ and $C_t$. 
Observe that for all $t$ the isogeny is determined by the map at the level of homology groups 
\[
 f_{t,\mathbb Z}:H_1(D_t, \mathbb Z) \lra H_1 (C_t,\mathbb Z) 
\]
which we still denote by $f_t$. We set $\Lambda_t\subset H_1(C_t,\mathbb Z)$ for the image of $f_t$. This is a sublattice of maximal rank $2g$. If we are able to prove that
$\Lambda_t =n H_1(C_t,\mathbb Z)$ for some positive integer $n$, then we would get $D_t\cong C_t$ and $f_t$ would be multiplication by $n$.

To show the equality  $\Lambda_t =n H_1(D_t,\mathbb Z)$ we obtain information on 
$\Lambda_t$ from the homology groups of the limits $C_0$ considered above. 
We can assume that there exists a disk $\mathbb  D\subset \overline{\mathcal U}$ centered at the class of the curve $C_0$ such that the curves $D_t, C_t$ corresponding to $\mathbb D \setminus \{0\}$ are smooth. After performing a base change we can assume that there is a family of 
 isogenies $f_{\mathbb D}:\mathcal {JD}_{\mathbb D}\lra \mathcal {JC}_{\mathbb D}$ that coincides with the original isogeny $f_t$ for a general $t$.
 We call $\mathcal D_{\mathbb D}$ and  $\mathcal C_{\mathbb D}$ the corresponding families of curves. Since the central fibres $D_0$ and $C_0$ are retracts of $\mathcal D_{\mathbb D}$ and  $\mathcal C_{\mathbb D}$ respectively, we have a 
diagram   as follows:
 \[
 \xymatrix@C=1.pc@R=1.8pc{
 H_1(D_t,\mathbb Z)\ar[d]^{f_t}  \ar[r] & H_1(\mathcal D_{\mathbb D},\mathbb Z) \ar@{=}[r] & H_1(D_0,\mathbb Z) \ar[d]^{f_0} \\
 H_1(C_t,\mathbb Z)                  \ar[r] & H_1(\mathcal C_{\mathbb D},\mathbb Z)  \ar@{=}[r]& H_1(C_0,\mathbb Z).  
 }
 \]
By the previous discussion we know that $D_0=C_0$ and $f_0$ is the multiplication by a non-zero integer $n$. Then modulo the vanishing cycles (the generators of the kernels of the horizontal arrows in the last diagram) the map $f_t$ is the multiplication by this constant. By performing a second degeneration with a different vanishing cycle one easily gets that $f_t$ is the multiplication by $n$. This finishes the proof of the Theorem.
\end{proof}

Next we show that the statements on intermediate Jacobians of cubic threefolds and on Jacobians of $d$-gonal curves are corollaries of Theorem \ref{jacs_isog_hyp}.
\vskip 3mm
\begin{proof} (of Theorems \ref{thm_cubic})and \ref{gonal})
Both are similar since the hyperelliptic locus appear in the closure of the locus of intermediate Jacobians (see \cite[Theorem (0.3)]{collino_fano}) as well as in the closure of the locus of $d$-gonal curves. So we only give the proof in the first case. Proceeding by contradiction and using Remark \ref{construction}, we can assume the existence of a family of isogenies $f:\mathcal {JC}\lra \mathcal {JT}$ over an open set $\mathcal U$, where $JT_b$ is the intermediate Jacobian of a cubic threefold $T_b$ for any $b\in \mathcal U$ and $JC_b$ is the Jacobian of a curve of genus $5$. We can apply verbatim the  argument of the last part of the  previous theorem when we consider a restriction of the family to a disk in such a way that the central fiber corresponds to a limit isogeny $f_0: JC_0 \lra JT_0$. This time $JT_0$ is an actual abelian variety and in fact it is the Jacobian of a general hyperelliptic curve $D_0$ of genus $5$. We apply Theorem \ref{jacs_isog_hyp} and we get that $C_0=D_0$ and $f_0$ is the multiplication by a non-zero integer. We obtain that the same is true for the general element in the disk. In particular the intermediate Jacobian of a smooth cubic threefolds would be isomorphic, as ppav, to a Jacobian which contradicts the main Theorem in \cite{clemens_griffiths}.   \end{proof}

\vskip 5mm
\section{Abelian varieties dominated by hyperelliptic Jacobians}

We devote the whole section to the proof of Theorem \ref{main}. Since the beginning of the proof of Theorems \ref{main} and \ref{jacs_dom_hyper} is very similar we start with the common set-up. The proof of Theorem \ref{jacs_dom_hyper} will be completed in the next section. 

Let us consider a closed subvariety $\mathcal Y$ of either  $\mathcal A_g$ or $\mathcal M_g$. In the first case we assume that $\dim \mathcal Y\ge 2g$ and in the second case that $\dim \mathcal Y=2g-1$. We proceed by contradiction, so we assume that the abelian variety attached to a very general element of $\mathcal Y$ (the Jacobian of the corresponding curve when $\mathcal Y \subset \mathcal M_g$) is dominated by the Jacobian of a hyperelliptic curve. We apply the construction in Remark \ref{construction} and we obtain the existence a family of surjective maps of abelian varieties (see diagram (\ref{map_of_families}))
\[
 \xymatrix@C=1.pc@R=1.8pc{
\mathcal {JC}  \ar[rd] \ar[rr]^ f && \mathcal {A} \ar[ld] \\
&\mathcal U 
}
\]
where $C_y$ is hyperelliptic for any $y\in \mathcal U$. In the case that $\mathcal Y\subset \mathcal M_g$ the family $\mathcal A$ is in fact a family of Jacobians $\mathcal {JD}$. 
Therefore in what follows, for the case $\mathcal Y\subset \mathcal M_g$, the abelian variety $A_y$ has to be replaced by the Jacobian $JD_y$. 

We also have a generically finite dominant map $\mathcal U \lra \mathcal Y$ sending $y\in \mathcal U$ to the isomorphism class of $A_y$.
We fix a general point $y\in \mathcal U$. We set
 $\mathbb T:= T_{\mathcal U}(y)\cong T_{\mathcal Y}(\Phi(y)) $. The
 differential of $\Phi$ in $y$ gives:
\[
 d\Phi_y: \mathbb T \hookrightarrow Sym^2\, H^{1,0}(A_y)^*.
\]
On the other hand $f_y:JC_y \lra A_y$ induces an inclusion of complex vector spaces $W:=f_y^*(H^{1,0}(A_y)) \subset H^0(C_y,\omega_{C_y})$.  Let $B$ the base divisor of the linear system $| W | \subset | \omega _{C_y} | $. Observe that $W\subset H^0(C_y,\omega _{C_y}(-B))$. 
Let us define $G_W$ the subset of the Grassmannian 
\[
G:=Grass(2,H^0(C_y,\omega_{C_y}(-B))) 
\]
consisting in the $2$-dimensional subspaces contained in $W$. Among these subspaces we are interested in those which, as linear systems, have some additional base point on $C_y$. In other words:
\[
 G_{BL}=\bigcup_{p\in C_y} Grass(2, W(-p)),
\]
where $W(-p)$ is the intersection of $W$ with $H^0(C_y,\omega_{C_y}(-B-p))$.

\vskip 3mm
\begin{lem}\label{grass} The subset $G_{BL}\subset G_W$ is an irreducible divisor of $G_W$. In particular, there exists a $2$-dimensional linear subspace $V\subset W$ such that the base locus of the pencil $| V | $ is $B$.
\end{lem}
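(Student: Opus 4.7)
The plan is to identify $G_{BL}$ as the image of a Grassmann bundle over $C_y$, so that irreducibility becomes automatic and the divisor claim reduces to a clean dimension count.

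First I would introduce the incidence variety
\[
\mathcal I := \{(p,V) \in C_y \times G_W : V \subset W(-p)\}
\]
together with its two projections $\pi_1 : \mathcal I \to C_y$ and $\pi_2 : \mathcal I \to G_W$, whose image is exactly $G_{BL}$. The key preliminary observation is that because $B$ is the \emph{full} base divisor of $W$, the linear system $W$, seen inside $H^0(C_y,\omega_{C_y}(-B))$, is base-point free, so for every $p\in C_y$ the evaluation map $W \to (\omega_{C_y}(-B))_p$ is surjective. Consequently $W(-p)$ has codimension exactly one in $W$ for every $p$, independent of $p$ (including the points of $\mathrm{supp}(B)$, which deserves a quick separate check since a section of $W$ vanishes to order $\geq \mathrm{mult}_p(B)$ at such a $p$, and base-point-freeness in $\omega_{C_y}(-B)$ provides one vanishing to order exactly $\mathrm{mult}_p(B)$).

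Writing $w = \dim W$, the fiber of $\pi_1$ over any $p$ is $\mathrm{Grass}(2,W(-p))$ of constant dimension $2(w-3)$, so $\mathcal I$ is a Grassmann bundle over the smooth irreducible curve $C_y$; hence $\mathcal I$ is irreducible of dimension $2w-5$. Its image $G_{BL}$ under $\pi_2$ is therefore also irreducible. For the dimension of $G_{BL}$ I would argue that $\pi_2$ is generically finite onto its image: a generic pencil $V \in G_{BL}$ has base divisor $B+p$ with a single extra base point $p$, because the locus of pencils with at least two additional base points is the union $\bigcup_{p+q\in C_y^{(2)}} \mathrm{Grass}(2,W(-p-q))$, which is of strictly smaller dimension. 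This yields $\dim G_{BL} = 2w-5 = \dim G_W - 1$, so $G_{BL}$ is an irreducible divisor in $G_W$.

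The ``in particular'' statement is then immediate: since $G_W = \mathrm{Grass}(2,W)$ is irreducible and $G_{BL}$ is a proper closed subset, the open complement $G_W \setminus G_{BL}$ is non-empty, and any $V$ in it is a two-dimensional subspace of $W$ whose pencil $|V|$ has base locus exactly $B$. The only genuinely delicate point I foresee is the uniform codimension-one behaviour of $W(-p)$ at points of $\mathrm{supp}(B)$; once that is established, irreducibility and the dimension count run without incident.
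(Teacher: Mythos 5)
Your construction is the same as the paper's: the same incidence variety $I\subset C_y\times G_W$, the same identification of the fibres of the first projection as $\mathrm{Grass}(2,W(-p))$ over the irreducible curve $C_y$ giving irreducibility and $\dim I=2g-5$, and then generic finiteness of the second projection onto $G_{BL}$. Your preliminary check that $W(-p)$ has codimension exactly one in $W$ for \emph{every} $p$, including points of $\mathrm{supp}(B)$ (using that $B$ is the full base divisor, so $|W|$ is base-point free as a subsystem of $|\omega_{C_y}(-B)|$), is correct and is left implicit in the paper.

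The one step that does not survive scrutiny is your justification of generic finiteness of $\pi_2$. You claim a generic $V\in G_{BL}$ has a \emph{single} extra base point because $\bigcup_{p+q}\mathrm{Grass}(2,W(-p-q))$ has strictly smaller dimension. In the situation at hand $C_y$ is hyperelliptic and $W\subset H^0(C_y,\omega_{C_y})$, so every element of $W$ is anti-invariant under the hyperelliptic involution $\iota$ and its divisor is $\iota$-invariant; hence $W(-p)=W(-p-\iota(p))$ for all $p$, every pencil in $G_{BL}$ acquires extra base points in whole $\iota$-orbits, and the union you invoke actually contains all of $G_{BL}$ (take $q=\iota(p)$), so it is not of smaller dimension. (Indeed, later in the paper the extra base locus of a general pencil in $W(-p)$ is identified as the whole fibre $\gamma^{-1}(\gamma(p))$.) The conclusion you need is nevertheless true for a more elementary reason, and it is the one the paper uses: every fibre of $\pi_2$ is finite, since the base locus of a pencil of sections of a line bundle on a curve is a finite set of points. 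Generic finiteness, hence $\dim G_{BL}=\dim I=2g-5=\dim G_W-1$, follows, and with that substitution your proof coincides with the paper's.
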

\begin{proof}
Observe that $\dim G_W=2g-4$.
To see that $G_{BL}$  is an irreducible divisor we consider the incidence variety in $C_y\times G_W$ defined by:
\[
 I=\{(p,V)\in C_y\times G_W \mid \text{ $p$ is a base point of the pencil } \vert V \vert \}.
\]
Then the fibers of the first projection are $Grass(2,W(-p))$, all irreducible of dimension $2(g-1-2)=2g-6$. Hence $I$ is irreducible of dimension $2g-5$. Since the second projection map has finite fibres we get that $G_{BL}$ is irreducible of codimension $1$ in $G_W$.
\end{proof}

\vskip 3mm
At this point the proofs of Theorems \ref{main} and \ref{jacs_dom_hyper} diverge. In the rest of this section we finish  the proof of Theorem \ref{main}.

\vskip 3mm
\textbf{Proof of \ref{main}: } We fix a subspace $V$ as in the Claim. We want to see the existence of $\xi \in \mathbb T$ such that  $\xi \cdot V=0$. This means that we look at $\xi $ as an element in $Sym^2 H^{1,0}(A_y)^*$ and hence as a symmetric map $\delta_ \xi:H^{1,0}(A_y)\lra H^{1,0}(A_y)^*$. The subspace $V$ is a subspace of  $W$, while $W$ is isomorphic to  $H^{1,0}(A_y)$, we ask for the condition $\delta_\xi (V)=0$. To prove that such a $\xi $ exists  we consider the restriction to $V$:
 \[
  \delta _{\xi }: V \lra H^{1,0}(A_y)^* \cong W^* =V ^*\oplus V^{*\perp }. 
 \]
This provides an element in $V^*\otimes V^* +V^*\otimes V^{*\perp }$ which, by the symmetry, belongs to $Sym^2 V^* + V^* \otimes V^{*\perp }$. 
This last space has dimension $3+2(g-2)=2g-1$. This is the place where the dimension assumption is used: since $\dim \mathcal Y \ge 2g$ we get that the linear map
\[
\mathbb T \lra Sym^2 V^* + V^* \otimes V^{*\perp }
\]
sending $\xi$ to $\delta_{\xi \mid V} $ has non trivial kernel. Therefore there exists  non-zero $\xi$ vanishing on $V$ as desired.
Let us fix $\xi \ne 0$ such an element for the rest of the proof.
Observe that $\xi$ can be seen via $f$ as an infinitesimal deformation of $C_y$. We denote by $E_{\xi}$ the rank $2$ vector bundle on $C_y$ associated to $\xi$ via the isomorphism $H^ 1(C_y,T_{C_y})\cong Ext^ 1(\omega_{C_y},\mathcal O_{C_y})$. By definition there is a short exact sequence of sheaves:
\[
 0\lra \mathcal O_{C_y} \lra E_\xi \lra \omega_{C_y} \lra 0.
\]
The connecting homomorphism in the associated long exact sequence of cohomology $H^0(C_y,\omega_{C_y})\lra H^1(C_y,\mathcal O_{C_y})$ is the cup-product with $\xi \in H^1(C_y,T_{C_y})$.
Consider two linearly independent holomorphic forms $\omega_1, \omega_2$  generating $V$. Since $\xi \cdot \omega_i=0$, then both $\omega _1, \omega_2$ lift to sections $s_1,s_2\in H^ 0(C_y,E_\xi)$. Now we apply the adjunction procedure as explained in \cite{collino_pirola}: the image of $s_1\wedge s_2 $ by the map
\[
 \Lambda^ 2H^ 0(C_y,E_y) \lra H^ 0(C_y, \Lambda ^ 2 E_\xi)\cong H^ 0(C_y,\omega_{C_y})
\]
provides a new form $adj_\xi(V)$ on the curve which is well-defined up to constant in the quotient of $H^ 0(C_y,\omega_{C_y})/V$. The main property of the adjoint form is the following.
\begin{thm} (\cite[Theorem 1.1.8]{collino_pirola})\label{CoPi}
 Let $\xi \in H^1(C_y,T_{C_y})$ and $V$ as above. Then  $adj_\xi(C_y)$ vanishes (i.e. the image of $s_1\wedge s_2$ in $H^ 0(C_y,\omega_{C_y})$ is contained in $V$) if and only if $\xi$ belongs to the kernel of the natural map:
 \[
  H^1(C_y,T_{C_y}) \lra H^1(C_y,T_{C_y}(B)),
 \]
where $B$ is the base locus of the linear system $| V |$.
\end{thm}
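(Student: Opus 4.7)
The plan is to reinterpret the restriction map $H^1(C_y,T_{C_y})\lra H^1(C_y,T_{C_y}(B))$ via pullback of extensions. Using the identifications $H^1(C_y,T_{C_y})=\operatorname{Ext}^1(\omega_{C_y},\mathcal O_{C_y})$ and $H^1(C_y,T_{C_y}(B))=\operatorname{Ext}^1(\omega_{C_y}(-B),\mathcal O_{C_y})$, this map is precisely pullback of extensions along $\omega_{C_y}(-B)\hookrightarrow \omega_{C_y}$. So the image of $\xi$ classifies the extension
\[
0\lra \mathcal O_{C_y}\lra E'_\xi\lra \omega_{C_y}(-B)\lra 0,
\]
where $E'_\xi=E_\xi\times_{\omega_{C_y}}\omega_{C_y}(-B)$ is naturally a subbundle of $E_\xi$. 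The key initial observation is that $\omega_1,\omega_2\in V\subset H^0(\omega_{C_y}(-B))$, so the lifts $s_i\in H^0(E_\xi)$ take values in $E'_\xi$ and define sections $s_i\in H^0(E'_\xi)$. Consequently $s_1\wedge s_2$ lies in $H^0(\Lambda^2 E'_\xi)=H^0(\omega_{C_y}(-B))$, and the theorem reduces to the statement: \emph{$s_1\wedge s_2\in V$ if and only if the extension $E'_\xi$ splits.}

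The easy direction is a direct computation. Given a splitting $\sigma:\omega_{C_y}(-B)\lra E'_\xi$, write $s_i=\sigma(\omega_i)+c_i\cdot 1$ with $c_i\in H^0(\mathcal O_{C_y})=\mathbb C$; expanding $s_1\wedge s_2$ in the split bundle $\mathcal O_{C_y}\oplus\omega_{C_y}(-B)$ (and using that $1\wedge s_i$ is the image $\omega_i$ of $s_i$ under the determinant identification) yields $s_1\wedge s_2=c_1\omega_2-c_2\omega_1\in V$.

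For the converse, assume $s_1\wedge s_2=c_1\omega_2-c_2\omega_1\in V$ and set $\tilde s_i:=s_i-c_i\cdot 1\in H^0(E'_\xi)$. The same bilinear expansion immediately gives $\tilde s_1\wedge \tilde s_2=0$ in $H^0(\omega_{C_y}(-B))$. Hence the map $(\tilde s_1,\tilde s_2):\mathcal O_{C_y}^{\oplus 2}\lra E'_\xi$ has generic rank one, and its image generates a saturated line subbundle $L\subset E'_\xi$ through which both $\tilde s_i$ factor.

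The delicate and decisive step is to upgrade $L$ to a splitting. If $L$ equalled the subbundle $\mathcal O_{C_y}\subset E'_\xi$, then both $\omega_i$ would be zero in $\omega_{C_y}(-B)$, contradicting $V\ne 0$. Therefore the composition $L\hookrightarrow E'_\xi\twoheadrightarrow \omega_{C_y}(-B)$ is a nonzero map of line bundles, so $L\cong \omega_{C_y}(-B-D)$ for some effective divisor $D\ge 0$. But then $\omega_1,\omega_2$ factor through $H^0(\omega_{C_y}(-B-D))$, which would force $D$ to be an additional base locus of the pencil $|V|$ beyond $B$, contradicting the very definition of $B$ as the full base locus. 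Hence $D=0$ and $L\cong \omega_{C_y}(-B)$ provides a splitting of $E'_\xi$, so $\xi$ lies in the kernel of $H^1(T_{C_y})\lra H^1(T_{C_y}(B))$. This final argument, which crucially exploits the maximality of $B$, is the main obstacle: without the full base locus assumption the subsheaf $L$ might only realize $\omega_{C_y}(-B-D)$ and fail to split the extension.
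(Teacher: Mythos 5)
Your argument is correct and complete. Note first that the paper does not prove this statement at all: it is quoted from Collino--Pirola \cite[Theorem 1.1.8]{collino_pirola}, so there is no internal proof to compare against; what you have written is a genuine self-contained proof of the quoted result. Your route is the natural sheaf-theoretic one: identifying $H^1(T_{C_y})\to H^1(T_{C_y}(B))$ with pullback of extensions along $\omega_{C_y}(-B)\hookrightarrow\omega_{C_y}$ is exactly the reading the authors themselves use later (they pass to $i^*E_\xi$ and its splitting), and the reduction to ``$s_1\wedge s_2\in V$ iff $E'_\xi$ splits'' is faithful to the statement since the adjoint class is only defined modulo $V$, which your constants $c_i$ account for on both sides. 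The computation $1\wedge s=\bar s$ under $\det E'_\xi\cong\omega_{C_y}(-B)$, the deduction that $\tilde s_1\wedge\tilde s_2=0$ forces the two modified lifts into a saturated line subbundle $L$, the exclusion of $L=\mathcal O_{C_y}$, and the final step $L\cong\omega_{C_y}(-B-D)$ with $D=0$ because $B$ is the \emph{entire} base locus of $|V|$ are all sound; you correctly locate the one place where the hypothesis on $B$ is indispensable. The only cosmetic caveat is that ``$V_1\cap V_2\ne\emptyset$''-style set phrasing aside, you should say explicitly that $L$ being saturated is what guarantees $L\to\omega_{C_y}(-B)$ is either zero or injective with locally free cokernel supported on a divisor $D$; but this is standard and does not affect the validity of the argument.
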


\vskip 3mm
Observe that the holomorphic forms on a hyperelliptic curve are all anti-invariant by the action of the natural involution (since there are no global holomorphic forms on the projective line). On the other hand, since $\xi$ is a deformation preserving the hyperelliptic condition, the involution on $C_y$ extends to $E_{\xi }$ and therefore to $H^0(C_y,E_\xi )$. We can choose anti-invariant liftings $s_1, s_2$ of a basis $\omega_1, \omega_2$ of $V$, hence $s_1\wedge s_2$ is invariant. This implies that the adjoint form is also invariant and therefore 
\[
 adj_\xi (V)=0.
\]
By applying Theorem \ref{CoPi} we know that the image of $\xi $ in 
\[
H^1(C_y, T_{C_y}(B))\cong Ext^1(\omega_{C_y}(-B), \mathcal O_{C_y}) 
\]
 is zero. This says that the corresponding extension is trivial, so  the short exact sequence in the first row of the next diagram splits (i.e. $i^* E_{\xi}=\mathcal O_{C_y} \oplus \omega _{C_y}(-B)$):
 \[
  \xymatrix@C=1.pc@R=1.8pc{
 0 \ar[r] &
 \mathcal  O_{C_y} \ar[r] \ar@{=}[d]   &
i^* E_{\xi} \ar[r] \ar[d]   &
 \omega _{C_y}(-B) \ar[r] \ar@{^{(}->}[d]^{i} &  0 \\
 0 \ar[r]&   O_{C_y} \ar[r]   &  E_{\xi} \ar[r]  & \omega _{C_y} \ar[r]  & 0
 }
 \]
which implies that the connecting homomorphism in the associated long exact sequence of cohomology $H^0(C_y,\omega _{C_y}(-B))\lra H^1(C_y,\mathcal O_{C_y})$ is trivial. Therefore $\xi \cdot H^0(C_y,\omega _{C_y}(-B))=0=\xi \cdot W$, this says that $\xi$ is in the kernel of $d\Phi_y$ which is a contradiction. This completes the proof of \ref{main}. \qed

 \vskip 3mm
 \begin{rem}
  Observe that the moduli space of ppav can be replaced by any moduli space of polarized abelian varieties of some fixed polarization type.
 \end{rem}

 \vskip 3mm
 \begin{rem}\label{example}
  The following example gives a family of hyperelliptic curves dominating a family of polarized abelian varieties. For any $C$ hyperelliptic of genus $g$, fix the two-to-one  map $C\lra \mathbb P^1$ and a degree $2$ covering $\mathbb P^1 \lra \mathbb P^1$ ramified in $2$ points. The fibre product $\tilde C:=C\times_{\mathbb P^1}\mathbb P^1$ is a curve of genus $2g+1$ with a degree $2$ map $\tilde C \lra C$. Then the Jacobian $J\tilde C$ dominates the $g$-dimensional Prym variety $P(\tilde C,C)$. By moving $C$ in the hyperelliptic locus we get a family of polarized abelian varieties of dimension $2g-1$ dominated by hyperelliptic Jacobians. So the bound $\dim \mathcal Y \ge 2g$ in Theorem \ref{main} is sharp.
\end{rem}

\vskip 5mm
\section{Jacobians dominated by hyperelliptic Jacobians}

The goal of this section is to finish the proof of Theorem \ref{jacs_dom_hyper}.  We keep from the beginning of the last section  the notation
\[
 \xymatrix@C=1.pc@R=1.8pc{
\mathcal {JC}  \ar[rd] \ar[rr]^{f}  && \mathcal {JD} \ar[dl] \\
&\mathcal U &
}
\]
where $\mathcal {JC}$ is a family of hyperelliptic Jacobian and $\Phi: \mathcal U \lra \mathcal M_g $ is the generically finite dominant map $\mathcal U \lra \mathcal Y\subset \mathcal M_g$ associated with the family $\mathcal {JD}$. Moreover, for a general point $y\in \mathcal U$ we have $\mathbb T:= T_{\mathcal U}(y)\cong T_{\mathcal Y}(\Phi(y)) $ and the inclusion
$ \mathbb T \hookrightarrow Sym^2\, H^0(D_y,\omega_{D_y})^*$. We also keep the notation $W$ for the identification of  $H^0(D_y,\omega_{D_y})$ as a vector subspace of $H^0(C_y,\omega_{C_y})$, $B$ for the base locus of the linear system 
$| W | $, and  $G_W$ for the Grassmannian of $2$-dimensional vector spaces in $W$. Remember (see Lemma \ref{grass}) that the set of $2$-dimensional subspaces of $W$ with some additional base point defines an irreducible divisor $G_{BL}$ of $G_W$.

\vskip 3mm
We want to compare this irreducible divisor with the closed subset of $G_W$  given by the subspaces that deform in some direction tangent to $\mathcal Y$:
\[
 \mathcal D=\{V\in G_W \mid \text{there exists } \xi \in \mathbb T \text{ such that } \xi \cdot V=0 \}.
\]
We claim that either $\mathcal D$ is a divisor or $\mathcal D=G_W$. This is the statement of the next lemma: 

\vskip 3mm
\begin{lem}
 With the above notation $\mathcal D$ has dimension $\ge 2g-5$.
\end{lem}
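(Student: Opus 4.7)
The plan is to realize $\mathcal D$ as the degeneracy locus of a morphism $\phi$ between two vector bundles of the same rank $2g-1$ on the projective variety $G_W=Grass(2,W)$, and then to invoke the standard fact that such a locus is either everything or the zero divisor of a non-zero section of the determinant line bundle. Let $\mathcal S$ and $\mathcal Q$ denote the universal sub- and quotient bundles on $G_W$, sitting in
$$0\lra \mathcal S\lra W\otimes \mathcal O_{G_W}\lra \mathcal Q\lra 0,$$
so that the fibre of $\mathcal S$ over $V\in G_W$ is $V$ and the fibre of $\mathcal Q^*$ is $V^{*\perp}\subset W^*$. The operation ``restrict a symmetric form on $W$ to $V\times W$'' defines a morphism $Sym^2 W^*\otimes \mathcal O_{G_W}\lra \mathcal S^*\otimes W^*$ whose image is a subbundle $\mathcal E\subset \mathcal S^*\otimes W^*$ of rank $3+2(g-2)=2g-1$; fibrewise $\mathcal E_V=Sym^2 V^*\oplus (V^*\otimes V^{*\perp})$. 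Precomposing with $\mathbb T\hookrightarrow Sym^2 W^*$ yields
$$\phi: \mathbb T\otimes \mathcal O_{G_W} \lra \mathcal E,\qquad \phi_V(\xi)=\delta_\xi|_V,$$
and by construction $\mathcal D$ is the locus of $V$ where $\phi$ fails to be injective. Both source and target have rank $2g-1=\dim\mathcal Y$.

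If $\phi$ is nowhere injective then $\mathcal D=G_W$ has dimension $2g-4$ and we are done. Otherwise $\det\phi$ is a non-zero global section of the line bundle $\det\mathcal E$, whose zero locus is exactly $\mathcal D$. Using the filtration $0\to \mathcal S^*\otimes \mathcal Q^*\to \mathcal E\to Sym^2\mathcal S^*\to 0$ together with $\det(Sym^2\mathcal S^*)=(\det\mathcal S^*)^3$, $\det(\mathcal S^*\otimes \mathcal Q^*)=(\det\mathcal S^*)^{g-2}\otimes (\det\mathcal Q^*)^2$, and $\det\mathcal Q^*=\det\mathcal S^*$ (from triviality of $\det(W\otimes \mathcal O_{G_W})$), one computes
$$\det\mathcal E=(\det\mathcal S^*)^{g+3},$$
a positive power of the ample Plücker line bundle on the Grassmannian. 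Hence $\det\phi$ cannot be nowhere-vanishing on the projective variety $G_W$, so its zero locus is a non-empty divisor of dimension $2g-5$. In either case $\dim\mathcal D\ge 2g-5$.

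The main delicate point, and the one place where anything nontrivial is used, is the matching of ranks: this is precisely where the hypothesis $\dim\mathcal Y=2g-1$ enters, and it is what forces the degeneracy locus to have the expected codimension at most one. The rest is formal: $G_W$ is smooth projective, the determinant bundle $(\det\mathcal S^*)^{g+3}$ is ample, and the usual machinery of degeneracy loci for square maps of vector bundles yields the bound. A minor technical check is that the target bundle $\mathcal E$ is well-defined globally as a subbundle of $\mathcal S^*\otimes W^*$, which follows from the symmetry constraint on $\xi$ being a sub-bundle condition, not merely a fibrewise one.
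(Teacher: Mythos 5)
Your argument is correct and takes a genuinely different route from the paper's. You package the whole problem as an equal-rank degeneracy locus: the bundle $\mathcal E$ with fibre $Sym^2V^*+V^*\otimes V^{*\perp}$ (well defined as the preimage of $Sym^2\mathcal S^*$ under $\mathcal S^*\otimes W^*\to\mathcal S^*\otimes\mathcal S^*$) has rank $2g-1=\dim\mathbb T$, so $\mathcal D$ is the zero locus of $\det\phi$, and ampleness of $\det\mathcal E$ forces that locus to be either all of $G_W$ or a non-empty divisor. The paper instead proves the same dichotomy by elementary projective geometry: it first shows $\mathcal D$ meets every line of the Pl\"ucker-embedded Grassmannian (by exhibiting, on each line $r$, a map $\mathcal O_{\mathbb P^1}(-1)^{\oplus(g-1)}\to\mathcal O_{\mathbb P^1}^{\oplus(g-1)}$ that must drop rank --- essentially your determinant argument localized to a line), and then runs an induction on $g$ using Schubert divisors $R_L$ to show that a closed subset meeting every line has codimension at most one. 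Your global version is cleaner and avoids that induction, at the cost of the bundle-theoretic setup; the paper's version is self-contained and uses nothing beyond the line bundles $\mathcal O_{\mathbb P^1}(\pm1)$. One correction to your computation: from $\det\mathcal S\otimes\det\mathcal Q=\mathcal O_{G_W}$ one gets $\det\mathcal Q^*=\det\mathcal S=(\det\mathcal S^*)^{-1}$, not $\det\mathcal S^*$, so
\[
\det\mathcal E=(\det\mathcal S^*)^{3}\otimes(\det\mathcal S^*)^{g-2}\otimes(\det\mathcal S^*)^{-2}=(\det\mathcal S^*)^{g-1},
\]
rather than $(\det\mathcal S^*)^{g+3}$. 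Since $g-1>0$ this is still a positive power of the Pl\"ucker bundle, so the slip does not affect your conclusion. (Incidentally, the exponent $g-1$ matches the degree $g-1$ of the determinant section appearing on each line in the paper's argument.)
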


\begin{proof}
 We consider the Grassmannian $G_W$ embedded in a projective space $\mathbb P^N$ via the Pl\"ucker embedding. First we see that $\mathcal D$ intersects any line of $\mathbb P^N$ contained in $G_W$. Note that $V_1,V_2\in G_W$ generates a line contained in $G_W$ if and ony if $V_1\cap V_2\neq \emptyset $. Hence we can assume that $V_1=\langle \omega, \omega_1 \rangle $ and $V_2=\langle \omega, \omega_2 \rangle$. The elements of the line $r:V_1 \vee V_2 \subset G_W$ are the subspaces $\langle \omega, \lambda \omega_1 + \mu \omega_2 \rangle $. Observe that the multiplication by $\omega $ defines a linear map:
 \[
  \mathbb T \lra W^*, \, \, \xi \lra \xi(\omega).
 \]
Hence $K_{\omega}=\{\xi \in \mathbb T \mid \xi(\omega) \}$ is a linear subspace of dimension at least $g-1$. By the symmetry of the map $\cdot \xi: W\lra W^*$ we get that $\xi \in K_\omega $ induces  a linear map:
\[
 \cdot \xi: W/\langle \omega \rangle \lra (W/\langle \omega \rangle)^*.
\]
Let us consider the projective line $\mathbb P^1$ given by the projectivization of the image of the subspace $\langle \omega_1, \omega_2 \rangle$ in $W/\langle \omega \rangle $. This line corresponds to the line $r$ considered at the beginning of the proof. Then there is a map of bundles of rank $g-1$ on this $\mathbb P^1$:
\[
 K_{\omega } \otimes \mathcal O_{\mathbb P^1}(-1) \cong \mathcal O_{\mathbb P^1}(-1)^{\oplus (g-1)} 
 \lra  (W/\langle \omega \rangle)^* \otimes \mathcal O_{\mathbb P ^1}\cong \mathcal O_{\mathbb P^1}^{\oplus (g-1)}. 
\]
Hence there is some point where the map drops the rank, so there exists $\xi $ killing $\omega $ and a form $\lambda \omega _1 + \mu \omega_2$ for some $(\lambda: \mu)\in \mathbb P^1$. Hence $\mathcal D$ intersects the line $r$.

To finish the proof of the lemma it is enough to show that a closed subvariety $\mathcal D\subset G_W$ intersecting all the lines contained in $G_W$ has codimension at most one. This is an easy projective geometry argument that we add due to the lack of a reference in the literature: we proceed by induction on $g$, for $g=3$ we have $G_W=\mathbb P(W)^*\cong \mathbb P^2$
and the statement is obvious. Assume that $g\ge 4$ and let $L$ be a linear subspace of codimension $2$ in $\mathbb P(W)$, then the Schubert cycle of the lines intersecting $L$ is an effective divisor $R_L\subset G_W$. Notice that for any hyperplane $H\subset \mathbb P(W)$ containing $L$, the Grassmannian of lines in $H$, $G_H$, is contained in $R_L$ and has codimension $2$ in $G_W$:
\[
G_H\subset R_L \subset G_W. 
\]
Observe that  $\mathcal D \cap G_H$ satisfy the hypothesis on the intersection on the lines on $H$ hence by induction hypothesis $\mathcal D \cap G_H$ is a divisor (or $G_H$) in 
$G_H$. Moving $H$ in the pencil of hyperplanes through $L$ we obtain that $\mathcal D\cap R_L$ is a divisor on $R_L$: indeed, otherwise $\mathcal D\cap R_L =\mathcal D \cap G_H$ 
for any $H$ in the pencil, therefore $\mathcal D\cap R_L =\mathcal D \cap (\bigcap _{H\supset L}\ G_H)$ which is a subset of the Grassmannian of lines in $L$, $G_L$. This contradicts that $\mathcal D\cap G_H$ is a divisor in $G_H$ since $\dim G_L=\dim G_H -2$.

Finally, if the codimension of $\mathcal D$ in $G_W$ were at least $2$ then it would be contained in $R_L$ for any $L$ which is impossible since the intersection of all the $R_L$ is empty. This finishes the proof. 
\end{proof}

If $\mathcal D$ is not contained in $G_{BL}$ we obtain that there exists $\xi \in \mathbb T$ and a subspace $V$ with base locus $B$ such that $\xi \cdot V=0$. As in the proof of Theorem \ref{main} we can apply the adjunction procedure to get the adjoint form, which must be trivial since it is an invariant form on a hyperelliptic curve. Therefore $\xi \cdot W=0$ which contradicts the injectivity of $d\Phi (y)$.

\vskip 3mm
So, from now on, we can assume that $\mathcal D \subset G_{BL}$ and since the second divisor is irreducible $\mathcal D = G_{BL}$. This means that for any point $p\in C_y$ and for any $V\subset W(-p)$ there exists a deformation $\xi$ killing $V$. Let us consider the map $\gamma: C_y\setminus B \lra |W|^*\cong \mathbb P^{g-1}$ given by  the linear system $|W|$. This is a linear projection composed with the canonical map of $C_y$. In particular, since $C_y$ is hyperelliptic, $\Gamma:= \gamma (C_y)$ is a rational curve. Arguing as in section $3$ it is easy to prove that for a general $2$-dimensional linear subspace $V$ of $W(-p)$ the base locus of the pencil $| V | $ is $\gamma^{-1} (\gamma (p))$.

\vskip 3mm
We fix $\xi\in T_{\mathcal Y}(\Phi(y))$ with $\xi \cdot V=0$ for a pencil with base locus exactly in $\gamma ^{-1}(\gamma (p))$. We can still do the construction of the adjoint form but instead of a contradiction we obtain that $\xi$ kills the whole subspace $W(-\gamma^{-1} (\gamma (p)))$ of dimension $g-1$. This means that $\xi$, as a infinitesimal deformation of $D_y$, has rank $1$. Thus we find a map from $C_y$ to the locus of rank $1$ deformations $S$ of $D_t$ in $\mathbb P H^1(D_t, T_{D_t})$ which  
factorizes through $\gamma$, hence  the image is a rational curve. By \cite[p. 275]{Gr_IVHS_III}, if  the Clifford index of $D_t$ is at least $2$ then $S$ is  the bicanonical image of $D_t$, hence we obtain a contradiction. Therefore the Clifford index is at most $1$. Moreover 
$D_y$ is not a quintic plane curve since the main result in \cite{quintic} states that there are no infinitesimal deformations of rank $1$ of a quintic plane curve that preserves the planarity of the curve. The conclusion is that eiher $D_y$ is hyperelliptic or it is trigonal, as claimed.

\end{document}